\definecolor{codegreen}{rgb}{0,0.6,0}
\definecolor{codegray}{rgb}{0.5,0.5,0.5}
\definecolor{codepurple}{rgb}{0.58,0,0.82}
\definecolor{backcolour}{rgb}{0.95,0.95,0.92}
\lstdefinestyle{mystyle}{
  backgroundcolor=\color{backcolour}, commentstyle=\color{codegreen},
  keywordstyle=\color{magenta},
  numberstyle=\tiny\color{codegray},
  stringstyle=\color{codepurple},
  basicstyle=\ttfamily\footnotesize,
  breakatwhitespace=false,         
  breaklines=true,                 
  captionpos=b,                    
  keepspaces=true,                 
  numbers=left,                    
  numbersep=5pt,                  
  showspaces=false,                
  showstringspaces=false,
  showtabs=false,                  
  tabsize=2
}
\newlength\tindent
\definecolor{darkred}{RGB}{170,0,0}
\definecolor{darkgreen}{RGB}{0,120,0}
\tikzset{
	labl/.style={anchor=south, rotate=90, inner sep=.5mm}
}
\definecolor{lightgray}{rgb}{0.4,0.4,0.4}
\let\le\leqslant
\let\ge\geqslant
\DeclareMathOperator{\Gal}{Gal}
\DeclareMathOperator{\PGL}{PGL}
\DeclareMathOperator{\sHom}{\mathscr{H}\text{\kern -3pt {\calligra\large om}}\,}
\DeclareMathSymbol{\lsb@l}{\mathalpha}{letters}{`l}
\renewcommand{\phi}{\varphi}
\renewcommand{\epsilon}{\varepsilon}
\newcommand{\BF}{{\mathbb{F}}}
\newcommand{\BP}{{\mathbb{P}}}
\newcommand{\BQ}{{\mathbb{Q}}}
\newcommand{\CO}{{\mathcal O}}
\newcommand{\CX}{{\mathcal X}}
\newcommand{\CY}{{\mathcal Y}}
\newtheorem{Prin}{Principle}
\newtheorem{Prop}[Prin]{Proposition}
\newtheorem{Satz}[Prin]{Theorem}
\newcommand{\thistheoremname}{}
\newtheorem*{genericthm}{\thistheoremname}
\theoremstyle{definition}
\definecolor{ferrarired}{rgb}{1.0, 0.11, 0.0}
\newtcolorbox{01Def}{colback=black!0,colframe=ferrarired!50}
\begin{document}

\title{Computing the semistable reduction of a particular plane quartic curve at $p=3$}

\begin{abstract}
We explain how to determine the semistable reduction of a particular plane quartic curve at $p=3$ that appears in the attempts of Rouse, Sutherland, and Zureick-Brown to compute the rational points on the non-split Cartan modular curve $X^+_\textrm{ns}(27)$.
\end{abstract}

\author{Ole Ossen}

\email{ole.ossen@uni-ulm.de}

\maketitle

\section*{Introduction}

Consider the local field $K\coloneqq\BQ_3(\zeta_3)$. We normalize its valuation $v_K\colon K\to\BQ\cup\{\infty\}$ so that $v_K(3)=1$, that is, so that $v_K(\zeta_3-1)=1/2$. Our goal is to determine the semistable reduction of the plane quartic curve $Y\subset\BP_K^2$ given by the equation
\begin{equation}
\label{eq-quartic-as-given}
\begin{aligned}
x^4 &+ (\zeta_3 - 1)x^3y + (3\zeta_3 + 2)x^3z - 3x^2z^2 + (2\zeta_3 + 2)xy^3 - 3\zeta_3 xy^2z&\\
&+ 3\zeta_3 xyz^2 - 2\zeta_3 xz^3 - \zeta_3 y^3z + 3\zeta_3 y^2z^2 + (-\zeta_3 + 1)yz^3 + (\zeta_3 + 1)z^4=0.
\end{aligned}
\end{equation}
This curve appears as a quotient of $X^+_\textrm{ns}(27)$, the non-split Cartan modular curve of level $27$, in Section 9 of \cite{rousesutherlandzureickbrown}, where attempts to compute the rational points on $X^+_\textrm{ns}(27)$ via $K$-rational points on $Y$ are described. Computing the latter might be feasible using the methods developed in \cite{balakrishnandogramuellertuitmanvonk}, but this requires at the least knowledge of the special fiber of a regular semistable model of $Y$. We have the following result:

\begin{Satz}
\label{theresult}
There exists a field extension $L/K$ of degree $108=2^2\cdot3^3$ (explicitly described below), of ramification index $54=2\cdot3^3$ and residue field $\BF_{3^2}$, with the following properties:
\begin{enumerate}[(a)]
\item The curve $Y_L$ has semistable reduction. The special fiber of a stable model of $Y_L$ consists of three components $\overline{Y}_1,\overline{Y}_2,\overline{Y}_3$ of genus $1$, each a smooth plane curve given by the equation
\begin{equation}\label{eq-char3equation}
y^3-y=x^2\quad\text{over $\BF_{3^2}$},
\end{equation}
and of one rational component. The components are configured as in Figure \ref{fig-1}.
\item The special fiber of a regular semistable model of $Y_L$ consists of three components $\overline{Y}_1,\overline{Y}_2,\overline{Y}_3$ of genus $1$, each a smooth plane curve given by Equation \eqref{eq-char3equation}, and of $25$ rational components, configured as in Figure \ref{fig-2}.
\end{enumerate}
\end{Satz}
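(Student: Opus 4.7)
The plan is to construct a semistable model of $Y$ by successive explicit blow-ups of the naive integral model $\sY \subset \BP^2_{\sO_K}$, and to determine the necessary base extension along the way. I first reduce Equation~\eqref{eq-quartic-as-given} modulo the maximal ideal of $\sO_K$ and examine the singular locus of the resulting curve in $\BP^2_{\BF_3}$; because $v_K(\zeta_3-1)=1/2$, the factor of $2$ in the ramification index of $L/K$ appears already at this stage, from the need for an element of valuation $1/2$ in order to scale certain monomials. From this preliminary reduction I identify three candidate centers $P_1,P_2,P_3$ on the special fiber where the future elliptic components will grow out.

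At each center $P_i$ I perform a local coordinate substitution of the form $x = x_i + \pi^{a_i} u$, $y = y_i + \pi^{b_i} v$ with rational exponents $a_i,b_i$ chosen so that, after dividing by an appropriate power of a uniformizer $\pi$, the resulting equation reduces modulo $\pi$ to the Artin--Schreier form $v^3 - v = u^2$ up to an invertible linear change of coordinates over $\BF_9$. The key wrinkle is that the target reduction is an Artin--Schreier cover in residue characteristic $3$, so the ramification of the cover is wild and the denominators of the $a_i, b_i$ contain a factor of $3$; since the three centers contribute independent fractions, the ramification index of $L/K$ picks up a factor of $3^3 = 27$ here. Combined with the factor of $2$ from $\zeta_3-1$ and the residue extension $\BF_9/\BF_3$ needed to write down the Artin--Schreier parameters, this produces the extension $L/K$ with $[L:K]=108$, $e=54$, $f=2$ predicted in the statement.

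Having verified in each chart that the strict transform $\overline Y_i$ is a smooth genus-$1$ curve of the form \eqref{eq-char3equation}, I check that $\overline Y_1,\overline Y_2,\overline Y_3$, together with the strict transform of the reduced naive special fiber (a single rational component), are configured as in Figure~\ref{fig-1} and meet only transversally at smooth points of each component; this establishes part~(a). For part~(b), I pass to the minimal regular semistable model by resolving the singularities of the total space at the intersection points of the special fiber: each is an $A_n$ singularity whose index $n$ is determined (in lowest terms) by the denominator of the rational exponent used to reach the corresponding chart. Each resolution inserts a chain of $n$ rational $(-2)$-curves; their lengths sum to $24$, yielding the $25$ rational components of Figure~\ref{fig-2}.

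The main obstacle is the second step: discovering the correct exponents $a_i, b_i$ and the accompanying $\BF_9$-rational linear change of variables at each center $P_i$. Because the residue characteristic equals the degree of the target Artin--Schreier cover, a naive Newton-polygon analysis does not determine the exponents; instead one needs iterative $\pi$-adic refinement in the style of Mac~Lane valuations (or the Bouw--Wewers framework for wild semistable reduction), combined with explicit computer algebra calculation. Once the correct charts are in hand, the remaining steps reduce to routine verifications of smoothness, transversality, and the standard combinatorial recipe for resolving $A_n$ singularities of the total space.
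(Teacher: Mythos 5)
Your strategy---successive blow-ups of the naive $\BP^2_{\CO_K}$-model, with local substitutions chosen so that each chart reduces to the Artin--Schreier curve $v^3-v=u^2$---is in principle a legitimate alternative to the paper's approach, which instead fixes a degree-$3$ cover $Y\to\BP^1$, builds a model of $\BP^1$ from explicitly given discoids, and takes the normalization. But as written the proposal has genuine gaps, and the central one is not merely that the exponents $a_i,b_i$ are left undetermined.

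First, the centers of the required modifications cannot be located by inspecting the reduction of the naive model. The three genus-$1$ components sit over three disks of radius $17/12$ whose centers are roots of a specific degree-$9$ polynomial $\psi$ (Equation \eqref{eq-phi}); these disks lie far inside residue disks of the naive reduction, and finding them is exactly the hard, non-formal part of the problem (the paper deliberately treats it as the output of a separate method). Your step ``from this preliminary reduction I identify three candidate centers'' assumes this information is visible modulo $3$, which it is not. Second, the ansatz $x=x_i+\pi^{a_i}u$, $y=y_i+\pi^{b_i}v$ with \emph{constant} $y_i$ is too restrictive for a wild degree-$3$ cover: to exhibit the reduction $z^3\pm z\pm x^2=0$ one must shift $y$ by a non-constant function of $x$ (in the paper's computation the tame parameter is obtained via $y\mapsto y+c+vx$ with $v$ a root of an auxiliary cubic); this is precisely the inductive-valuation phenomenon you allude to but do not resolve, and without it no choice of $a_i,b_i$ produces the stated reduction. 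Third, the accounting of $L/K$ is off: $K=\BQ_3(\zeta_3)$ already contains elements of valuation $1/2$, so the factor $2$ in $e(L/K)=54$ cannot come from $\zeta_3-1$; it is forced either by requiring $17/12\in v_L(L^\times)$ together with the existence of $L$-rational points over each disk, or (in the paper's second proof) by Abhyankar's lemma. Finally, part (a) also needs an argument that no further components or base extension are required (the paper uses the arithmetic-genus count plus \cite[Theorem 10.4.44]{liu} for minimality of $L$), and part (b) needs the actual thicknesses of the double points ($1/12$, computed from the radii $7/6$ and $17/12$ and the degree of $\phi$ on the intervening annuli) to obtain $25=1+3\cdot 8$; the phrase ``determined by the denominator of the rational exponent'' does not substitute for this computation.
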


\begin{figure}[!htb]
\centering
\begin{minipage}{.5\textwidth}
\centering
\includegraphics[scale=0.2]{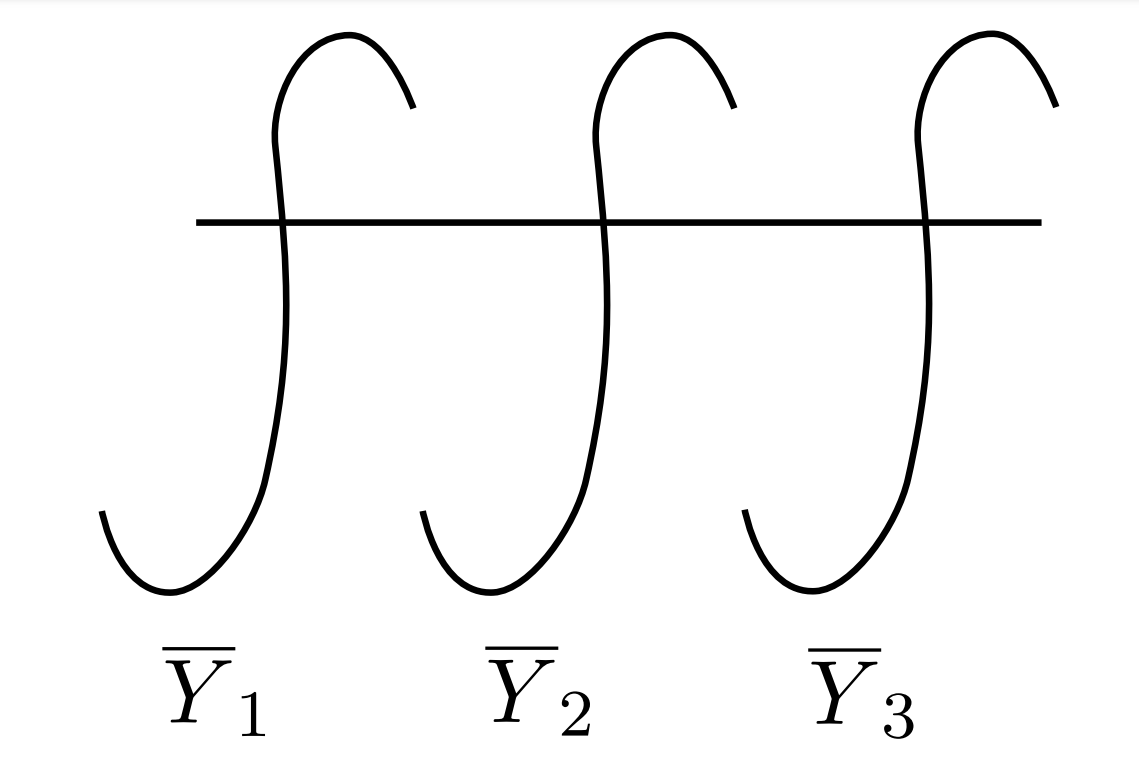}
\caption{The special fiber of the stable model of $Y_L$}
\label{fig-1}
\end{minipage}%
\begin{minipage}{.5\textwidth}
\centering
\includegraphics[scale=0.25]{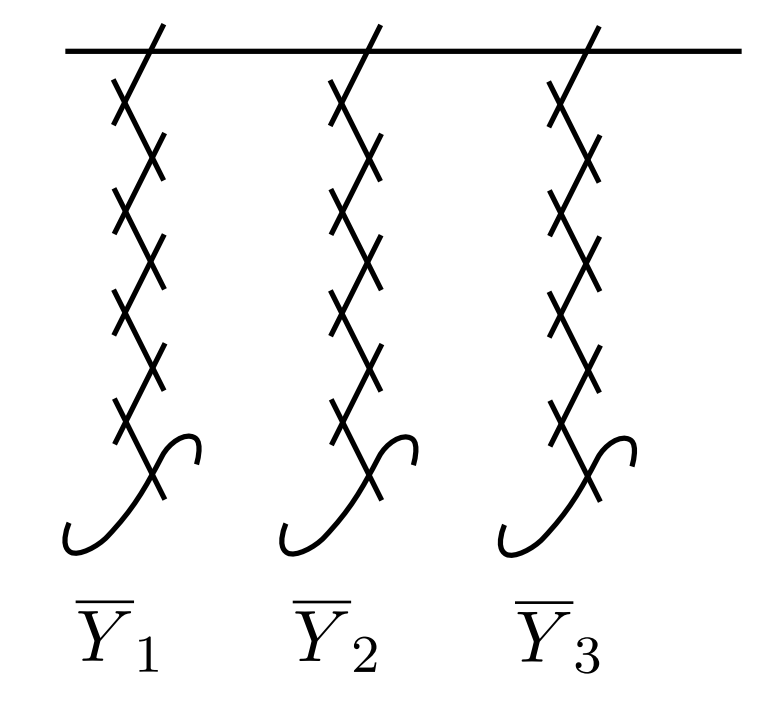}
\captionof{figure}{The special fiber of a regular semistable model of $Y_L$}
\label{fig-2}
\end{minipage}
\end{figure}

After recalling some preliminaries in Section 1, we give two proofs of Theorem \ref{theresult}, each based on a different degree $3$ cover of the projective line by $Y$. An appendix contains several computer programs used to arrive at the results of Theorem \ref{theresult}.

While we describe certain models of $Y$ and check that they have the reduction claimed in Theorem \nolinebreak \ref{theresult}, the question of how to find these models is left unaddressed. Developing a systematic method for finding semistable models of plane quartics at $p=3$ is part of the author's PhD thesis. Here, the method is left a black box, and only its output is considered. However, given the output, it is not hard to check that the corresponding model does in fact have the claimed reduction, and we explain how to do this in detail.

\textbf{Acknowledgement}: I want to thank my doctoral supervisor Stefan Wewers for many helpful discussions, and for commenting on an earlier version of this document.

\section{Models and valuations}

In this section, $K$ is a field that is complete with respect to a discrete valuation $v_K\colon K\to\BQ\cup\{\infty\}$, with ring of integers $\CO_K$ and residue field $k$. We also denote the extension of $v_K$ to an algebraic closure $\overline{K}$ of $K$ by $v_K$. Let $Y$ be a smooth irreducible projective algebraic curve over $K$. A \emph{model} of $Y$ is a normal flat and proper $\CO_K$-scheme $\CY$ with an isomorphism $\CY\otimes_{\CO_K}K\cong Y$.

A valuation $v$ on the function field $K(Y)$ is called a \emph{Type II valuation} if it extends $v_K$ and has residue field of transcendence degree $1$ over $K$. Given a model $\CY$ of $Y$ and an irreducible component $Z$ of the special fiber $\CY_s=\CY\otimes_{\CO_K}k$ of $\CY$, the local ring $\CO_{\CY,\xi_{Z}}$ at the generic point $\xi_{Z}$ of $Z$ is a discrete valuation ring. Denote its valuation by $v_Z$; it is a Type II valuation.

\begin{Prop}
\label{julian-prop}
\begin{enumerate}[(a)]
\item The map
\begin{equation*}
\CY\mapsto\{v_Z\mid Z\subseteq\CY_s\text{ irreducible component}\}
\end{equation*}
induces a bijection between isomorphism classes of models of $Y$ and finite non-empty sets of Type II valuations on $K(Y)$.
\item Let $Y\to X$ be a cover of smooth irreducible projective algebraic curves over $K$. Let $\CX$ be a model of $X$ and let $\CY$ be the normalization of $\CX$ in the function field $K(Y)$. Then $\CY$ is a model of $Y$ and the valuations corresponding to $\CY$ via the bijection in (a) are the extensions of valuations corresponding to $\CX$.
\end{enumerate}
\end{Prop}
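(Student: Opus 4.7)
The plan is to treat part (a) as the substantive content and derive part (b) from it by a direct calculation with normalizations.

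For part (a), well-definedness is straightforward: for a model $\CY$ and a component $Z \subseteq \CY_s$, the local ring $\CO_{\CY,\xi_Z}$ is a DVR because $\CY$ is normal and $\xi_Z$ has codimension one in $\CY$ (the special fiber being a Cartier divisor by flatness of $\CY$ over $\CO_K$). A uniformizer of $\CO_K$ lies in its maximal ideal, so $v_Z$ extends $v_K$, and the residue field is the function field $k(Z)$, of transcendence degree one. For injectivity, the height-one points of $\CY$ split into the \emph{horizontal} ones, which correspond bijectively to closed points of $Y = \CY_K$ and are independent of the choice of model, and the \emph{vertical} ones, which are exactly the generic points of components of $\CY_s$. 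A normal integral noetherian scheme is determined by its height-one local rings (via $A = \bigcap_{\Fp} A_\Fp$ on every affine open), so the set of vertical valuations determines $\CY$ up to unique isomorphism.

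Surjectivity is the real work. Given a finite nonempty set $S = \{v_1,\ldots,v_n\}$ of Type II valuations, I would start with an auxiliary projective model $\CY_0$ of $Y$, for instance the normalization of the closure of $Y \subseteq \BP_K^n$ in $\BP_{\CO_K}^n$. Each $v_i$ has a unique center on $\CY_0$ by the valuative criterion of properness; if this center is not already the generic point of a component of $(\CY_0)_s$, then iteratively blowing up the center and renormalizing eventually produces a component whose associated Type II valuation is $v_i$. After finitely many blowups one has a normal projective model $\CY_1$ whose set of vertical valuations contains $S$. The remaining step---removing superfluous components---is achieved by contracting the unwanted vertical curves in the category of normal arithmetic surfaces, or equivalently by gluing affine opens whose rings of sections are intersections of the prescribed valuation rings, with noetherianity secured by Krull--Akizuki.

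For part (b), the normalization $\CY$ of $\CX$ in $K(Y)$ is finite over $\CX$ (e.g.\ by Krull--Akizuki applied locally), hence proper over $\CO_K$; it is normal by construction and reduced, hence flat over the DVR $\CO_K$; and its generic fiber is the normalization of $X$ in $K(Y)$, which equals $Y$ since $Y$ is smooth. The vertical height-one primes of $\CY$ lie above the vertical height-one primes of $\CX$ (by going-down for the finite morphism between normal integral schemes), so the associated valuations on $K(Y)$ restrict to those on $K(X)$ coming from $\CX$; conversely, any extension of such a valuation to $K(Y)$ is realized by some such prime. The main obstacle, by a clear margin, is the surjectivity half of (a): blowing up to \emph{produce} missing components is standard, but \emph{suppressing} unwanted ones while preserving normality, properness, and the identification of the generic fiber requires the contraction/gluing step sketched above. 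Everything else reduces to bookkeeping with normal arithmetic surfaces over $\Spec(\CO_K)$.
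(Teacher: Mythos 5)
The paper does not prove this proposition at all: the ``proof'' is a one-line citation to Chapter 3 and Section 5.1.2 of R\"uth's thesis, where the correspondence between normal models and finite sets of divisorial (Type II) valuations is developed in full. Your proposal therefore supplies exactly the work the paper delegates to that reference, and its architecture --- well-definedness from normality and flatness, injectivity by reconstructing the model from its height-one local rings, surjectivity by blowing up to realize each valuation and then contracting the surplus components, and part (b) via finiteness of the normalization together with the correspondence between primes of an integral closure and extensions of a valuation --- is the standard route and is sound in outline.

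Three steps, however, are justified incorrectly or not at all. First, the injectivity lemma as you state it is false: a normal integral noetherian scheme is \emph{not} determined by its height-one local rings (compare $\BA^2_k$ with $\BA^2_k\setminus\{0\}$); properness is essential, and the clean argument takes the normalization of the closure of the diagonal in $\CY_1\times_{\CO_K}\CY_2$ and uses the valuative criterion together with Zariski's main theorem to see that neither projection contracts a vertical curve, hence both are isomorphisms. Second, both appeals to Krull--Akizuki are misplaced: that theorem gives noetherianity of one-dimensional intermediate rings, not finiteness of a normalization of a two-dimensional scheme (which here follows from excellence of the complete DVR $\CO_K$), nor noetherianity of an intersection of valuation rings in a field of transcendence degree one over $K$. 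Third, the two load-bearing steps of surjectivity are only gestured at: termination of your blowup procedure requires knowing that a Type II valuation is divisorial (this is where Abhyankar's inequality, with equality forcing a discrete value group and finitely generated residue field, enters), and the existence of contractions of a proper subset of the vertical components of a projective normal fibered surface is a genuine theorem (it appears in \cite{liu}, Section 8.3.3), not a formality. None of these defects is irreparable, but each marks a place where the cited reference does real work that your sketch currently assumes.
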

\begin{proof}
See \cite[Chapter 3 and Section 5.1.2]{rueth}.
\end{proof}

Given a model $\CY$, we will often talk of the valuation corresponding to an irreducible component of $\CY_s$ via Proposition \ref{julian-prop}, or of the irreducible component corresponding to a valuation on $K(Y)$.

Valuations on the rational function field $K(x)$ can be described using \emph{discoids}. Given a monic irreducible polynomial $\psi\in K[x]$ and a rational number $\lambda\in\BQ$, the set 
\begin{equation*}
D(\psi,\lambda)\coloneqq\{x\in\overline{K}\mid v_K(\psi(x))\ge\lambda\}
\end{equation*}
is called the \emph{discoid with center $\psi$ and radius $\lambda$}. Given a discoid $D$, there is a valuation $v_D$ on $K(x)$ determined by 
\begin{equation*}
v_D(f)=\inf\{v_K(f(x))\mid x\in D\},\qquad f\in K[x].
\end{equation*}
It is shown in \cite[Theorem 4.56]{rueth} that every Type II valuation on $K(x)$ arises in this way. If $\psi=x-\alpha$ is linear, then $D$ is simply a closed disk in $\overline{K}$ and $v_D$ is the \emph{Gauss valuation} with center $\alpha$ and radius $\lambda$.

Note that radii of disks and discoids are defined in terms of valuations instead of in terms of the corresponding absolute values. In particular, the larger the radius of a disk or discoid is, the smaller it is.

Berkovich (\cite{berkovich}) has defined an analytification $Y^\textrm{an}$ of $Y$. Its underlying set contains among others the closed points of $Y$ and so-called Type II points, corresponding to the Type II valuations introduced above. We will sometimes use this geometric perspective and talk of \emph{skeletons} of $Y$. These are finite metric subgraphs contained in the analytification $Y^\textrm{an}$ capturing the reduction type of $Y$, see \cite[Chapter 4]{berkovich}.

Let $\phi\colon Y\to X$ be a morphism of smooth irreducible projective curves and suppose that we are given a model $\CX$ of $X$. Given an irreducible component $Z$ of $\CX_s$, the function field of $Z$ is by construction the residue field of the valuation $v_Z$ corresponding to $Z$. Given an extension $w$ of $v_Z$ to $K(Y)$, $w$ corresponds by Proposition \ref{julian-prop}(b) to an irreducible component $W$ of $\CY_s$, where $\CY$ is the normalization of $\CX$ in $K(Y)$. The extension of the residue fields of $w$ and $v_Z$ is also the extension of function fields corresponding to the morphism $W\to Z$.

Suppose now that $\phi$ is of degree $p$, where $p$ is the characteristic of $k$. If the extension of residue fields of $w$ and $v_Z$ is of degree $p$, we say that $v_Z$ (or the corresponding point of $X^\mathrm{an}$) is a \emph{wild topological branch point} of $\phi$. In \cite{ctt}, a \emph{different function} $\delta$ is used to describe the wild topological branch locus of a cover of analytic curves, see \cite[Lemma 4.2.2 and Remark 4.2.3]{ctt} in particular. We use it in the appendix to determine wild topological branch loci in two cases.

\section{Using boundary points of the wild topological branch locus}

In this section we give a first proof of Theorem \ref{theresult}. We begin by slightly rewriting Equation \eqref{eq-quartic-as-given} defining the curve $Y$. Plugging in $z+x(2\zeta_3+2)/\zeta_3$ for $z$ eliminates the $xy^3$ term in \eqref{eq-quartic-as-given}. Then setting $z=1$ we arrive at the affine equation
\begin{equation}
\label{eq-Aversion}
y^3+Ay^2+By+C=0,
\end{equation}
where
\begin{equation*}
A=(6\zeta_3 + 12)x^2+ (36\zeta_3 + 9)x- 27,
\end{equation*}
\begin{equation*}
B=(9\zeta_3 - 18)x^3+ (-108\zeta_3 - 108)x^2+ (-162\zeta_3 + 81)x+ (81\zeta_3 + 162),
\end{equation*}
\begin{equation*}
C=(27\zeta_3 - 243)x^4+ (-1458\zeta_3 - 999)x^3+ (-1215\zeta_3 + 1701)x^2+ (1944\zeta_3 + 2430)x+ 729\zeta_3.
\end{equation*}
In this way we obtain a degree-$3$ cover $\phi\colon Y\to\BP_K^1$ corresponding to the extension of the rational function field $K(x)$ generated by $y$.

A certain discoid is crucial for describing the semistable reduction of $Y$, namely the discoid
\begin{equation}
\label{eq-crucial-discoid}
D(\psi,45/4)=\{x\in\overline{K}\mid v_K(\psi(x))\ge45/4\},
\end{equation}
where
\begin{equation}\label{eq-phi}
\begin{aligned}
\psi&=x^9 + (9\zeta_3 - 9)x^8 + (54\zeta_3 + 27)x^7 + (54\zeta_3 - 27/2)x^6 + (243\zeta_3 + 972)x^5 + 729\zeta_3 x^4\\ &+ (2916\zeta_3 - 1458)x^3 + (37179\zeta_3 + 41553)x^2 + (6561\zeta_3 + 6561/8)x - 63423\zeta_3 + 155277.
\end{aligned}
\end{equation}
It is by no means obvious how to find this discoid. As mentioned in the introduction, it comes from a general method for computing the semistable reduction of plane quartics at $p=3$ that is part of the author's PhD thesis.

In a splitting field of $\psi$, the roots of $\psi$ split into three clusters of three roots each. More precisely, we have the following: For any root $\alpha$ of $\psi$, the smallest closed disk around $\alpha$ containing another root of $\psi$ is of radius $3/2$ and contains three roots in total. The smallest closed disk containing all nine roots of $\psi$ is of radius $7/6$.

In Figure \ref{fig-6}, a skeleton of $\BP^1_{\overline{K}}$ separating the zeros of $\psi$ is pictured. The dashed lines represent disks of the indicated radius, centered at one of the roots of $\psi$. The intersection of the wild topological branch locus of $\phi$ with this skeleton consists of the intervals in red (endpoints included). See the discussion before Code Listing \ref{code-4} in the appendix for how to find this wild topological branch locus.

\begin{figure}[!htb]
\centering
\includegraphics[scale=0.44]{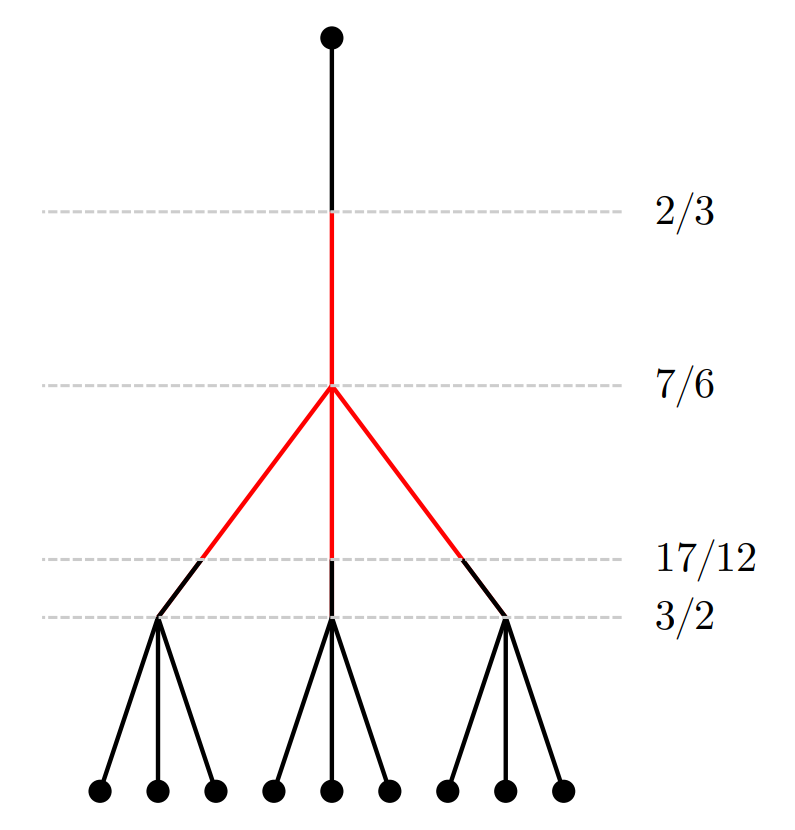}
\caption{A skeleton of $\BP^1_{\overline{K}}$ containing the zeros of $\psi$}
\label{fig-6}
\end{figure}

We now begin the proof of Theorem \ref{theresult}. We claim that $Y$ has semistable reduction over an extension $L/K$ satisfying the following:
\begin{itemize}
\item The value group $v_L(L^\times)$ contains the radius $17/12$
\item For each of the three closed disks of radius $17/12$ in Figure \ref{fig-6}, there is an $L$-rational point on $Y$ that maps to this disk
\end{itemize}
For example, we could first adjoin to $K$ a splitting field of $\psi$, yielding rational points in each of the closed disks of radius $17/12$ in Figure \ref{fig-6}, and then adjoin the $y$-coordinate of a rational point on $Y$ above each of these rational points. Finally, take an extension for which $17/12$ is in the value group $v_L(L^\times)$.

However, it turns out that this extension is larger than necessary. In Code Listing \ref{code-5}, we show that there exists an extension $L/K$ satisfying both conditions above and having the degree and ramification index claimed in Theorem \ref{theresult}.

It remains to show that over an extension $L$ satisfying the conditions above, $Y$ has semistable reduction with reduction as in Figure \ref{fig-1}. To this end, use Proposition \ref{julian-prop}(a) to construct a model $\CX$ of $\BP^1_L$ whose special fiber has four components, corresponding to the boundary point of the disk of radius $7/6$ and to the boundary points of the three disks of radius $17/12$ in Figure \ref{fig-6}. Denote the three components of $\CX_s$ corresponding to the latter by $Z_1,Z_2,Z_3$. 

Let $\CY$ denote the normalization of $\CX$ in the function field $K(Y_L)$. We claim that $\CY$ has semistable reduction. The calculation in Code Listing \ref{code-4} shows that $\CY_{\CO_{\overline{K}}}$ has semistable reduction. Indeed, it computes the component of $(\CY_{\CO_{\overline{K}}})_s$ above the component of $(\CX_{\CO_{\overline{K}}})_s$ corresponding to one of the disks of radius $17/12$ in Figure \ref{fig-6}. It is the genus-$1$ curve given by Equation \eqref{eq-char3equation}. Because of the symmetry afforded by the action of $\Gal(\overline{K}/K)$, all three components of $(\CY_{\CO_{\overline{K}}})_s$ above the components corresponding to the disks of radius $17/12$ are genus-$1$ curves given by \eqref{eq-char3equation}. Thus $(\CY_{\CO_{\overline{K}}})_s$ is a curve with four components; three components are of genus $1$ and intersect the last component, which is rational, in one point each. Since the arithmetic genus of $(\CY_{\CO_{\overline{K}}})_s$ is $3$, it follows from \cite[Proposition 7.5.4]{liu} that $(\CY_{\CO_{\overline{K}}})_s$ is a semistable curve as pictured in Figure \ref{fig-1}.

Now let $M/L$ be a minimal extension over which $Y$ has semistable reduction. Since there is an $L$-rational point in each of the closed disks of radius $17/12$ in Figure \ref{fig-6} and $17/12\in v_L(L^\times)$, the action of $\Gal(M/L)$ on the components $Z_i$ of $\CX_s$ is trivial. The only automorphisms of the component $\overline{Y}_i$ over $Z_i$ (the Artin Schreier cover given by \eqref{eq-char3equation}) are given by $(x,y)\mapsto(x,y+c)$, $c=0,1,2$. Since $Y$ has an $L$-rational point above each of the closed disks of radius $17/12$, we see that $\Gal(M/L)$ acts trivially on $\CY_s$. It follows from \cite[Theorem 10.4.44]{liu} that $M=L$.

This concludes the proof of Theorem \ref{theresult}(a). To prove (b), consider the map $\CY_s\to\CX_s$ induced by $\phi$, sketched in Figure \ref{fig-7}. The inseparable component, corresponding to the disk of radius $7/6$ in Figure \ref{fig-6}, is in red. Each double point has its \emph{thickness} written next to it (see \cite[Definition 10.3.23]{liu}). It equals the width of the anulus on the analytification which is the inverse image of the double point under the reduction map.

For the double points on $\CX_s$, this thickness $7/6-17/12=1/4$ can be read off from Figure \ref{fig-6}. Since $\phi$ is degree-$3$ on these anuli, we get the thickness $1/12$ for the double points on $\CY_s$ (see for example \cite[Lemma 3.5.8]{ctt}). Part (b) of Theorem \ref{theresult} follows, since $L/\BQ_3$ is of ramification index $108$: An anulus of width $1/12$ splits into nine anuli of width $1/108$.

\begin{figure}[!htb]
\centering
\includegraphics[scale=0.4]{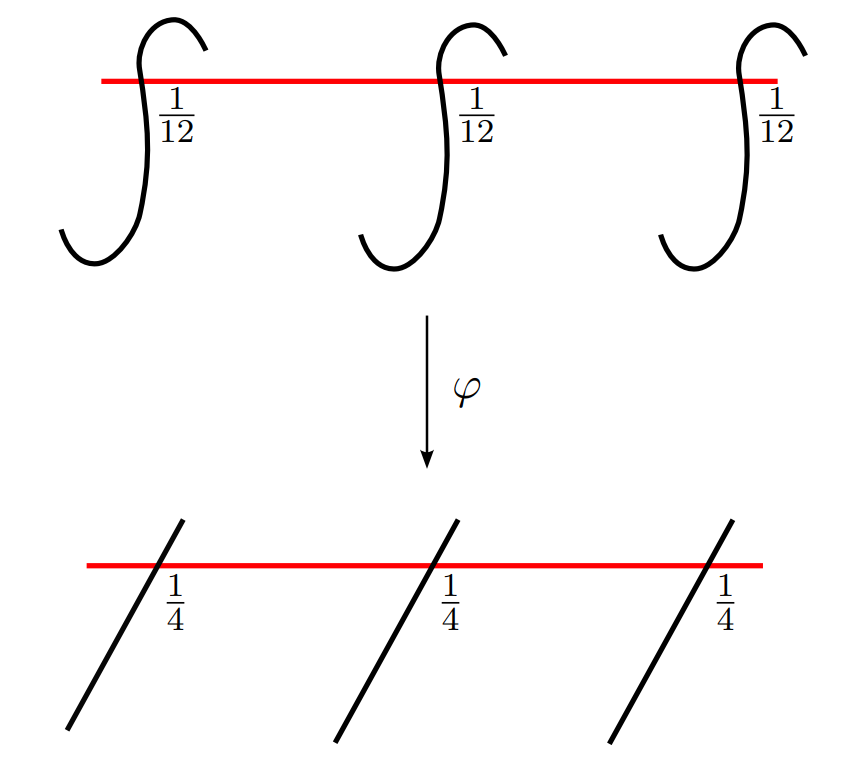}
\caption{The cover $\CY_s\to\CX_s$}
\label{fig-7}
\end{figure}

\section{Using separation of branch points}

In this section we use a different strategy to prove Theorem \ref{theresult}. After enlarging the base field to a finite extension $K'/K$, we change coordinates to obtain a different cover $\phi\colon Y\to\BP_{K'}^1$ from the one in the previous section. It turns out that (after a further finite extension $L/K'$) for a model $\CX$ of $\BP_L^1$ that ``separates the branch points'' of $\phi$, the normalization $\CY$ of $\CX$ in the function field $K(Y_L)$ is a semistable model of $Y_L$. 

It follows that the induced map $\CY_s\to\CX_s$ is an \emph{admissible cover} (\cite[Section 4]{harrismumford}), unlike the induced map on special fibers from the previous section. It is not clear if one can always find a cover $Y\to\BP^1_{K'}$ for which ``separation of branch points'' works. While the crucial discoid \eqref{eq-crucial-discoid} from the previous section was obtained from a general method, the coordinate changes below are ad-hoc and were essentially found by accident.

The curve defined by \eqref{eq-quartic-as-given} has an inflection point $[a^2:a:1]$, where $a$ satisfies the equation $a^3+\zeta_3+1=0$. A third root of unity $\zeta_3$ and an element $a$ satisfying this equation are both contained in $K'\coloneqq\BQ_3(\zeta_9)$; in fact, we can take $\zeta_3=\zeta_9^3$ and  $a=\zeta_9^2$.

We now transform \eqref{eq-quartic-as-given} by applying coordinate transformations achieving the following:
\begin{itemize}
\item Move the inflection point $[a^2:a:1]$ to $[0:0:1]$
\item Move the tangent line to $Y_{K'}$
at $[0:0:1]$ to the line $\{[s:0:t]\mid s,t\in K'\}$
\item In the homogeneous polynomial defining $Y_{K'}$, eliminate the terms of degree $2$ in $z$
\item Replace $x$ with $9z$, $y$ with $3x$, and $z$ with $y$
\end{itemize}
The last step is of course not important, but will allow us to use the variable names $x$ and $y$ in the same way as in the previous section (as generator of the rational function field and of the function field of $Y$ respectively). Each of the above bullet points corresponds to one of the matrices in the following product

\begin{equation*}
\begin{pmatrix}
0&3&0\\0&0&1\\9&0&0
\end{pmatrix}\begin{pmatrix}
9&0&-5\zeta_9^5 + \zeta_9^4 + 4\zeta_9^3 - \zeta_9^2 - \zeta_9 + 2\\0&9&3\zeta_9^4-3\zeta_9^3+6\zeta_9+3\\0&0&9
\end{pmatrix}\begin{pmatrix}
1&0&0\\-\zeta_9^4&1&0\\0&0&1
\end{pmatrix}\begin{pmatrix}1&0&0\\0&1&0\\\zeta_9^4&\zeta_9^2&1\end{pmatrix}\in\PGL_3(K').
\end{equation*}
Applying this transformation to \eqref{eq-quartic-as-given} and setting $z=1$, we arrive at the affine equation
\begin{equation}\label{eq-simpleform}
y^3+By+C=0,
\end{equation}
where
\begin{equation*}
\begin{aligned}
B&=(\zeta_9^5 + \zeta_9^3 + \zeta_9^2)x^3\\ &+ 9\zeta_9^4x^2\\ &+ (9\zeta_9^5 - 27\zeta_9^4 - 18\zeta_9^3 + 18\zeta_9^2 - 36)x\\ &+ 9\zeta_9^5 + 54\zeta_9^4 + 18\zeta_9^3 - 36\zeta_9^2 + 27\zeta_9 + 63,
\end{aligned}
\end{equation*}
\begin{equation*}
\begin{aligned}
C&=(-2\zeta_9^5 + \zeta_9^4 + \zeta_9^3 - \zeta_9^2 + 2\zeta_9 - 1)x^4\\ &+ (24\zeta_9^5 - 6\zeta_9^4 + 15\zeta_9^3 + 12\zeta_9^2 + 15)x^3\\ &+ (-45\zeta_9^5 + 9\zeta_9^4 - 18\zeta_9^3 + 45\zeta_9^2 - 63\zeta_9 - 36)x^2\\ &+ (135\zeta_9^5 + 135\zeta_9^4 - 135\zeta_9^3 - 54\zeta_9^2 + 270\zeta_9 - 27)x\\ &- 189\zeta_9^5 - 81\zeta_9^4 + 279\zeta_9^3 - 108\zeta_9^2 - 270\zeta_9 + 72.
\end{aligned}
\end{equation*}
Again, we obtain a degree-$3$ cover $\phi\colon Y\to\BP^1_{K'}$, corresponding to the extension of the rational function field $K'(x)$ generated by $y$.

To compute the semistable reduction of $Y_{K'}$, we begin with the discriminant of the polynomial $y^3+By+C$, 
\begin{equation}\label{eq-discriminant}
\Delta=-4B^4-27C^2\in K'[x].
\end{equation}
It is an irreducible polynomial, whose splitting field $K''$ is of degree $18$ over $K'$, of ramification index $9$. The roots of $\Delta$ behave similarly to the roots of the polynomial $\psi$ from the previous section, clustering into three groups of three roots each: For any root $\alpha$ of $\Delta$, the smallest closed disk around $\alpha$ containing another root of $\Delta$ is of radius $4/3$ and contains three roots in total. The smallest closed disk containing all nine roots of $\Delta$ is of radius $10/9$.

In Figure \ref{fig-4}, the skeleton of $\BP^1_{\overline{K}}$ spanned by the ten branch points of $\phi$ is pictured. The nine zeros of $\Delta$ are at the bottom, the point at infinity is at the top. The dashed lines represent disks of the indicated radius, centered at one of the zeros of $\Delta$. The intersection of the wild topological branch locus of $\phi$ with this skeleton consists of the intervals in red (endpoints included). See the appendix and Code Listing \ref{code-1} in particular for how to find this wild topological branch locus.

\begin{figure}[!htb]
\centering
\includegraphics[scale=0.5]{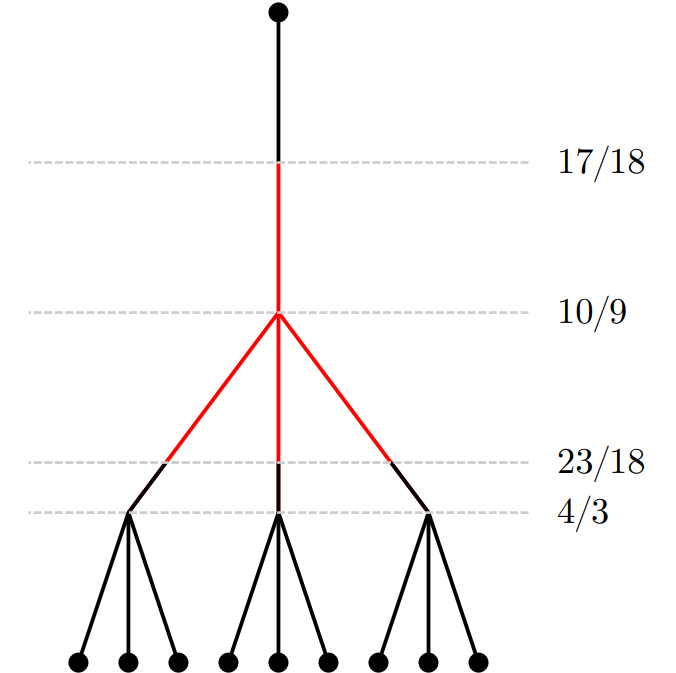}
\caption{The skeleton of $\BP_{\overline{K}}^1$ spanned by the branch points of $\phi$, with the wild topological branch locus marked in red}
\label{fig-4}
\end{figure}

Using Proposition \ref{julian-prop}(a), construct a model $\CX$ of $\BP^1_{K''}$ whose special fiber has seven components corresponding to the following Type II points:
\begin{itemize}
\item The boundary point of the disk of radius $10/9$ in Figure \ref{fig-4}
\item The boundary points of the three disks of radius $23/18$ in Figure \ref{fig-4}
\item The boundary points of the three disks of radius $4/3$ in Figure \ref{fig-4}
\end{itemize}

Let $L$ be the compositum of $K''$ and a ramified extension $K'''/K'$ of degree $2$. Then $L$ is of degree 
\begin{equation*}
[K'':K']\cdot[K''':K']\cdot[K':K]=18\cdot2\cdot3=108,
\end{equation*}
over $K$, of ramification index $9\cdot2\cdot3=54$, as in the statement of Theorem \ref{theresult}. Moreover, $Y_L$ has semistable reduction. To see this, consider a valuation $v$ corresponding to one of the disks of radius $4/3$ in Figure \ref{fig-4}. Since the corresponding Type II point does not lie in the wild topological branch locus, $v$ splits in $K(Y_L)$. Since $K'''/K'$ is chosen to be of ramification index divisible by $2$, the valuation $v$ does not ramify in $K(Y_L)$ by Abhyankar's Lemma. Rather, there is a valuation $w$ above $v$ with residue field extension of degree $2$ (and another valuation above $v$ with residue field extension of degree $1$).

In the appendix, Code Listing \ref{code-2} explicitly computes this function field. It is given by the affine equation over $\BF_{3^2}$
\begin{equation}\label{eq-completelydifferent}
y^2=x^3+x-1.
\end{equation}
Note that this genus-$1$ curve is completely different from \eqref{eq-char3equation} if considered as a cover of $\BP^1_{\BF_{3^6}}$, with function field $\BF_{3^2}(x)$, but is isomorphic to the curve defined by \eqref{eq-char3equation} as a plane curve (switch $x$ and $y$). The curve defined by \eqref{eq-completelydifferent} is branched at four points, namely infinity and specializations of the three roots of $\Delta$ in the disk corresponding to $v$.

Because of the symmetry afforded by the action of $\Gal(\overline{K}/K')$, we get a genus-$1$ component given by \eqref{eq-completelydifferent} for each of the three disks of radius $4/3$ in Figure \ref{fig-4}. We conclude that $\CY_s$ is semistable in the same way as in the previous section: The special fiber $\CY_s$ has arithmetic genus $3$, so can only have double points as singularities and no further components of positive genus.

In Figure \ref{fig-5}, the induced map $\CY_s\to\CX_s$ is pictured. The inseparable component, corresponding to the disk of radius $10/9$ in Figure \ref{fig-4}, is in red. The relevant double points have their thickness written next to them (compare with the end of the previous section). For the double points on $\CX_s$, this can be read off from Figure \ref{fig-4}. The double points on $\CY_s$ follow, since $\phi$ induces a degree-$2$ cover over the anuli of width $1/18$ and a degree-$3$ cover over the anuli of width $1/6$.

\begin{figure}[!htb]
\centering
\includegraphics[scale=0.5]{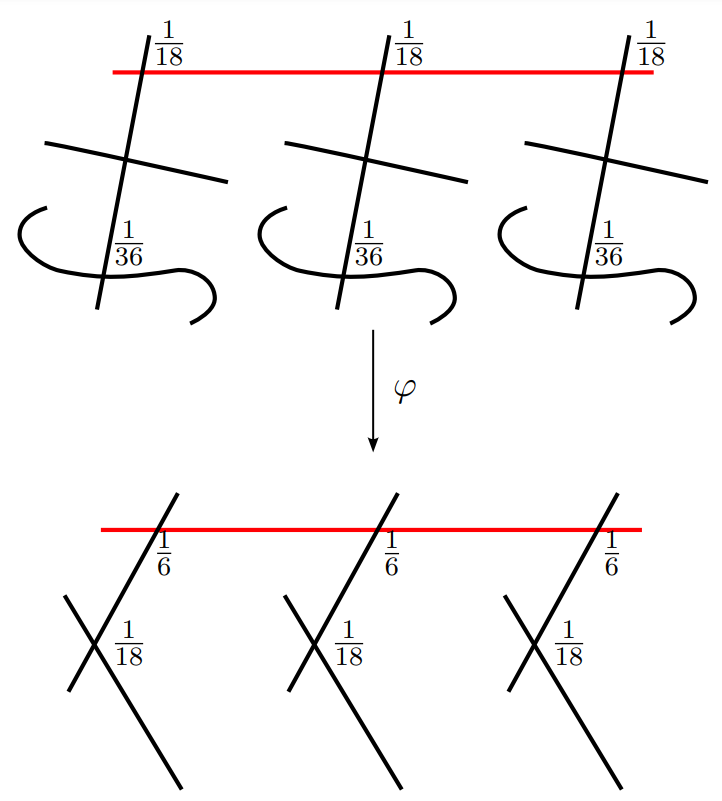}
\caption{The cover $\CY_s\to\CX_s$}
\label{fig-5}
\end{figure}

We immediately get part (a) of Theorem \ref{theresult} by contracting rational components in $\CY_s$. Part (b) follows just as in Section 2: $L/\BQ_3$ has ramification index $108$, and an anulus of width $1/18+1/36=1/12$ splits into $9$ anuli of width $1/108$.

\appendix

\section{Computer programs}

The SageMath programs in this appendix are written for SageMath Version 9.5 or newer, and require the branch \texttt{padic\_extensions} of the package mclf\footnote{Available on github: \href{https://github.com/MCLF/mclf/tree/padic_extensions/mclf}{https://github.com/MCLF/mclf/tree/padic\_extensions/mclf}}.

All Magma programs are written for Magma Version 2.27-7. They all run in under two minutes on the freely available online Magma Calculator\footnote{\href{http://magma.maths.usyd.edu.au/calc/}{http://magma.maths.usyd.edu.au/calc/}}.

Code Listings \ref{code-4} and \ref{code-5} belong to the strategy explained in Section 2, while Code Listings \ref{code-3}, \ref{code-1}, and \ref{code-2} belong to the strategy of Section 3.

In Code Listing \ref{code-4}, the minimal polynomial \texttt{H} of a certain generator $z$ of the function field of the curve $Y$ defined by Equation \eqref{eq-Aversion} is computed. It is of the form
\begin{equation}\label{eq-tameParameter}
z^3+Az^2+Bz+C,
\end{equation}
where $A,B,C$ are polynomials of degree $2$, $3$, and $4$ respectively.

Denote by $v_r$ the Gauss valuation centered at one of the roots of the polynomial $\psi$ from Equation \nolinebreak \eqref{eq-phi} of radius $r$. The valuations of the coefficients of $A,B,C$ with respect to $v_r$ are computed in Code Listing \ref{code-4} as well. They show that the Newton polygon of \texttt{H} with respect to $v_{17/12}$ is a straight line, with the valuations of $B$ and $C$ on this line and the valuation of $A$ above this line. Furthermore, $B$ is dominated by its constant coefficient and $C$ is dominated by its degree-$2$ coefficient. This shows that the residue field extension we are after is given by an equation of the form
\begin{equation*}
z^3\pm z\pm x^2=0.
\end{equation*}
(The curves defined by any choices $\pm$ are isomorphic, so this agrees with Equation \eqref{eq-char3equation}.)

Code Listing \ref{code-4} can also be used to compute the different function in the sense of \cite{ctt} of $\phi$ on the skeleton of Figure \ref{fig-6}. However, we work with valuations rather than with absolute values, and consider the different as a function on $\BP^1_{\overline{K}}$ rather than on $Y_{\overline{K}}^\textrm{an}$ (which is permissible, since $\phi$ induces a bijection above the wild topological branch locus). Thus the different function calculated below is actually $\frac{-\log_3}{3}$ applied to the different function of \cite{ctt}.

The element $z$ considered above is a tame parameter for $K(Y_{\overline{K}})$ in the sense of \cite[Section \nolinebreak 2.1.2]{ctt}, and the generator $x$ is a tame parameter for the rational function field $\overline{K}(x)$. Thus we can use \cite[Corollary 2.4.6(ii)]{ctt} to compute the different function. Let $v_r$ correspond to a point in the wild topological branch locus; then it has a unique extension $w_r$ to $K(Y_{\overline{K}})$. Differentiating \eqref{eq-tameParameter} yields
\begin{equation*}
(3z^2+2Az+B)dz=-(A'z^2+B'z+C')dx.
\end{equation*}
Denoting the different at the point corresponding to $v_r$ by $\delta(r)$, we have
\begin{IEEEeqnarray*}{rCl}
\delta(r)&=&w_r(3z^2+2Az+B)-w_r(A'z^2+B'z+C')+w_r(z)-w_r(x)\\&=&\min(1+\tfrac23v_r(C),v_r(A)+\tfrac13v_r(C),v_r(B))-\tfrac23v_r(C).
\end{IEEEeqnarray*}
Here we have used that $w_r(z)=\frac{1}{3}v_r(C)\le\frac{1}{2}v_r(B),v_r(A)$, because $w_r$ is the unique extension of $v_r$, and that
\begin{equation*}
v_r(C')+v_r(x)=v_r(C),
\end{equation*}
$C$ being by construction of the form $c_4x^4+c_2x^2+c_1x$.

The function $\delta(r)$ is positive for $\tfrac{2}{3}<r<\tfrac{17}{12}$, which explains the wild topological branch locus in Figure \ref{fig-6}.

\begin{lstlisting}[language=Magma, label={code-4}, caption=Magma program. Used to compute wild topological branch locus and induced map on special fibers]
Qp := pAdicField(3, 100);
P<t> := PolynomialRing(Qp);
K1<zeta3> := LocalField(Qp, t^2+t+1);
P<x> := PolynomialRing(K1);
K2<a> := LocalField(K1, x^9 + (9*zeta3 - 9)*x^8 + (54*zeta3 + 27)*x^7 + (54*zeta3 - 27/2)*x^6 + (243*zeta3 + 972)*x^5 + 729*zeta3*x^4 + (2916*zeta3 - 1458)*x^3 + (37179*zeta3 + 41553)*x^2 + (6561*zeta3 + 6561/8)*x - 63423*zeta3 + 155277);
R<x,y> := PolynomialRing(K2, 2);
F := (27*zeta3 - 243)*x^4 + (9*zeta3 - 18)*x^3*y + (6*zeta3 + 12)*x^2*y^2 + (-1458*zeta3 - 999)*x^3 + (-108*zeta3 - 108)*x^2*y + (36*zeta3 + 9)*x*y^2 + y^3 + (-1215*zeta3 + 1701)*x^2 + (-162*zeta3 + 81)*x*y - 27*y^2 + (1944*zeta3 + 2430)*x + (81*zeta3 + 162)*y + (729*zeta3);
P<y> := PolynomialRing(K2);
h := hom< R -> P | a, y >;
G := Factorization(h(F))[1][1];
K3<c> := LocalField(K2, G);
a := K3!a;

R<x,y> := PolynomialRing(K3, 2);
F := R!F;
H := Evaluate(F, x, x+a);
H := Evaluate(H, y, y+c);

P<u> := PolynomialRing(K3);
r := u^3 + MonomialCoefficient(H, y^2*x)*u^2 + MonomialCoefficient(H, y*x^2)*u + MonomialCoefficient(H, x^3);
K4<v> := LocalField(K3, r);
R<x,y> := PolynomialRing(K4, 2);
H := R!H;
H := Evaluate(H, y, y+v*x);

f := Valuation(K4!3);
r := 17/12;

vA0 := Valuation(MonomialCoefficient(H, y^2))/f;
vA1 := Valuation(MonomialCoefficient(H, y^2*x))/f + r;
vA2 := Valuation(MonomialCoefficient(H, y^2*x^2))/f + 2*r;
vB0 := Valuation(MonomialCoefficient(H, y))/f;
vB1 := Valuation(MonomialCoefficient(H, y*x))/f + r;
vB2 := Valuation(MonomialCoefficient(H, y*x^2))/f + 2*r;
vB3 := Valuation(MonomialCoefficient(H, y*x^3))/f + 3*r;
vC0 := Valuation(MonomialCoefficient(H, 1))/f;
vC1 := Valuation(MonomialCoefficient(H, x))/f + r;
vC2 := Valuation(MonomialCoefficient(H, x^2))/f + 2*r;
vC3 := Valuation(MonomialCoefficient(H, x^3))/f + 3*r;
vC4 := Valuation(MonomialCoefficient(H, x^4))/f + 4*r;
\end{lstlisting}

In Code Listing \ref{code-5} below we check that using the approach of Section 2, we can find a field extension $L/K$ of degree $108$ and ramification index $54$ over which $Y$ has semistable reduction.

This $L$ is the local field \texttt{K3local} in the code listing, whose ramification index is printed at the end. We construct it by taking a field extension as considered in Section 3. First, adjoin a ninth root of unity, then a zero of the discriminant of the cover considered in Section 3. Over the resulting extension, one of the disks of radius $17/12$ in Figure \ref{fig-6} contains a rational point, namely the disk corresponding to the Type II point \texttt{T.vertices()[2]}. All three disks contain a \texttt{K3local}-rational point. Finally, we check that there are rational points on $Y$ above the given rational points. They correspond to the factor of degree $1$ in the list \texttt{l}.

\begin{lstlisting}[language=Python, label={code-5}, caption=SageMath program. Used to verify that we have semistable reduction over a field extension as in Theorem \ref{theresult}]
from mclf import *

R.<t> = QQ[]
K1.<zeta9> = QQ.extension(t^6+t^3+1)
v = K1.valuation(3)
K1local = pAdicNumberField(K1, v)
R.<x> = K1[]
discoid = x^9 + (54*zeta9^3 - 27/2)*x^6 + (-243*zeta9^2 - 243*zeta9 - 243)*x^5 + (729*zeta9^2 - 729)*x^4 + (-729*zeta9^4 - 2187*zeta9^2 + 2187*zeta9 + 729)*x^3 + (10935*zeta9^5 + 4374*zeta9^4 - 2187*zeta9^3 + 2187*zeta9^2 + 2187*zeta9 + 15309)*x^2 + (13122*zeta9^5 - 6561/2*zeta9^4 - 13122*zeta9^2 + 6561/2*zeta9)*x - 13122*zeta9^5 + 6561/2*zeta9^4 - 2187*zeta9^3 + 13122*zeta9^2 - 32805*zeta9 + 24057/2
K2local  = K1local.simple_extension(discoid)
K2 = K2local.number_field()
zeta9 = K2local.embedding().approximate_generator(10)
w = K2.valuation(3)
R.<x> = K2[]
zeta3 = zeta9^3
psi = x^9 + (9*zeta3 - 9)*x^8 + (54*zeta3 + 27)*x^7 + (54*zeta3 - 27/2)*x^6 + (243*zeta3 + 972)*x^5 + 729*zeta3*x^4 + (2916*zeta3 - 1458)*x^3 + (37179*zeta3 + 41553)*x^2 + (6561*zeta3 + 6561/8)*x - 63423*zeta3 + 155277

FX.<x> = FunctionField(K2)
X = BerkovichLine(FX, w)
T = BerkovichTree(X)
T.add_point(X.gauss_point())
for xi in X.points_from_inequality(FX(psi), 45/4):
	T.add_point(xi)

alpha = -(T.vertices()[2].discoid()[0].numerator()[0])
R.<x,y> = K2[]
F = (27*zeta3 - 243)*x^4 + (9*zeta3 - 18)*x^3*y + (6*zeta3 + 12)*x^2*y^2 + (-1458*zeta3 - 999)*x^3 + (-108*zeta3 - 108)*x^2*y + (36*zeta3 + 9)*x*y^2 + y^3 + (-1215*zeta3 + 1701)*x^2 + (-162*zeta3 + 81)*x*y - 27*y^2 + (1944*zeta3 + 2430)*x + (81*zeta3 + 162)*y + (729*zeta3)
R.<y> = K2[]
F = R(F(x=alpha))
l = approximate_factorization(K2local, F)

K3local = K2local.simple_extension(T.vertices()[3].discoid()[0].numerator())
K3 = K3local.number_field()
w = K3.valuation(3)
print(w(w.uniformizer()))
\end{lstlisting}

In Code Listing \ref{code-3}, the field \texttt{K1} is a ramified extension of $\BQ_3(\zeta_9)$ of degree $2$. We compute the discriminant $\Delta$ as in Equation \eqref{eq-discriminant} and then use the functionality of mclf to compute a skeleton of the projective line separating the zeros of $\Delta$. The resulting tree \texttt{T} is used in Code Listings \ref{code-1} and \ref{code-2}.
\begin{lstlisting}[language=Python, label={code-3},caption=SageMath program. Used to compute a skeleton of the projective line separating branch points]
from mclf import *

R.<c> = QQ[]
K1.<c> = QQ.extension(c^12 - 12*c^10 + 60*c^8 - 159*c^6 + 234*c^4 - 180*c^2 + 57)
zeta9 = c^2 - 2

FX.<x> = FunctionField(K1)
R.<y> = FX[]
F = (-2*zeta9^5 + zeta9^4 + zeta9^3 - zeta9^2 + 2*zeta9 - 1)*x^4 + (zeta9^5 + zeta9^3 + zeta9^2)*x^3*y + (24*zeta9^5 - 6*zeta9^4 + 15*zeta9^3 + 12*zeta9^2 + 15)*x^3 + (9*zeta9^4)*x^2*y + y^3 + (-45*zeta9^5 + 9*zeta9^4 - 18*zeta9^3 + 45*zeta9^2 - 63*zeta9 - 36)*x^2 + (9*zeta9^5 - 27*zeta9^4 - 18*zeta9^3 + 18*zeta9^2 - 36)*x*y + (135*zeta9^5 + 135*zeta9^4 - 135*zeta9^3 - 54*zeta9^2 + 270*zeta9 - 27)*x + (9*zeta9^5 + 54*zeta9^4 + 18*zeta9^3 - 36*zeta9^2 + 27*zeta9 + 63)*y + (-189*zeta9^5 - 81*zeta9^4 + 279*zeta9^3 - 108*zeta9^2 - 270*zeta9 + 72)
FY = FX.extension(F)
v = K1.valuation(3)

X = BerkovichLine(FX, v)
T = BerkovichTree(X)
Delta = F.discriminant()
T = T.adapt_to_function(Delta)
T.permanent_completion()
\end{lstlisting}

Code Listing \ref{code-1} is analogous to Code Listing \ref{code-4}. It lets us compute the different function of the cover $\phi$ considered in Section 3, which turns out to be positive for $23/18<r<17/18$. See the explanation preceding Code Listing \ref{code-4} for an explanation of how to read off the wild topological branch locus from the output. To define the field \texttt{K2}, we use a discoid computed in Code Listing \ref{code-3}, namely the discoid corresponding to the vertex \texttt{T.vertices()[2]} of the tree \texttt{T} calculated by that code listing.

\begin{lstlisting}[language=Magma,caption= Magma program. Used for computing another wild topological branch locus, label={code-1}]
Qp := pAdicField(3, 100);
P<t> := PolynomialRing(Qp);
K1<zeta9> := LocalField(Qp, t^6+t^3+1);
P<x> := PolynomialRing(K1);

K2<a> := LocalField(K1, x^9 + (54*zeta9^3 - 27/2)*x^6 + (-243*zeta9^2 - 243*zeta9 - 243)*x^5 + (729*zeta9^2 - 729)*x^4 + (-729*zeta9^4 - 2187*zeta9^2 + 2187*zeta9 + 729)*x^3 + (10935*zeta9^5 + 4374*zeta9^4 - 2187*zeta9^3 + 2187*zeta9^2 + 2187*zeta9 + 15309)*x^2 + (13122*zeta9^5 - 6561/2*zeta9^4 - 13122*zeta9^2 + 6561/2*zeta9)*x - 13122*zeta9^5 + 6561/2*zeta9^4 - 2187*zeta9^3 + 13122*zeta9^2 - 32805*zeta9 + 24057/2);
R<x,y> := PolynomialRing(K2, 2);
F := (-2*zeta9^5 + zeta9^4 + zeta9^3 - zeta9^2 + 2*zeta9 - 1)*x^4 + (zeta9^5 + zeta9^3 + zeta9^2)*x^3*y + (24*zeta9^5 - 6*zeta9^4 + 15*zeta9^3 + 12*zeta9^2 + 15)*x^3 + (9*zeta9^4)*x^2*y + y^3 + (-45*zeta9^5 + 9*zeta9^4 - 18*zeta9^3 + 45*zeta9^2 - 63*zeta9 - 36)*x^2 + (9*zeta9^5 - 27*zeta9^4 - 18*zeta9^3 + 18*zeta9^2 - 36)*x*y + (135*zeta9^5 + 135*zeta9^4 - 135*zeta9^3 - 54*zeta9^2 + 270*zeta9 - 27)*x + (9*zeta9^5 + 54*zeta9^4 + 18*zeta9^3 - 36*zeta9^2 + 27*zeta9 + 63)*y + (-189*zeta9^5 - 81*zeta9^4 + 279*zeta9^3 - 108*zeta9^2 - 270*zeta9 + 72);
P<y> := PolynomialRing(K2);
h := hom< R -> P | a, y >;
G := Factorization(h(F))[1][1];
K3<c> := LocalField(K2, G);
a := K3!a;

R<x,y> := PolynomialRing(K3, 2);
F := R!F;
H := Evaluate(F, x, x+a);
H := Evaluate(H, y, y+c);

P<u> := PolynomialRing(K3);
r := u^3 + MonomialCoefficient(H, y^2*x)*u^2 + MonomialCoefficient(H, y*x^2)*u + MonomialCoefficient(H, x^3);
K4<v> := LocalField(K3, r);
R<x,y> := PolynomialRing(K4, 2);
H := R!H;
H := Evaluate(H, y, y+v*x);

f := Valuation(K4!3);
r := 17/18;

vA0 := Valuation(MonomialCoefficient(H, y^2))/f;
vA1 := Valuation(MonomialCoefficient(H, y^2*x))/f + r;
vB0 := Valuation(MonomialCoefficient(H, y))/f;
vB1 := Valuation(MonomialCoefficient(H, y*x))/f + r;
vB2 := Valuation(MonomialCoefficient(H, y*x^2))/f + 2*r;
vB3 := Valuation(MonomialCoefficient(H, y*x^3))/f + 3*r;
vC0 := Valuation(MonomialCoefficient(H, 1))/f;
vC1 := Valuation(MonomialCoefficient(H, x))/f + r;
vC2 := Valuation(MonomialCoefficient(H, x^2))/f + 2*r;
vC3 := Valuation(MonomialCoefficient(H, x^3))/f + 3*r;
vC4 := Valuation(MonomialCoefficient(H, x^4))/f + 4*r;
\end{lstlisting}

Code Listing \ref{code-2} computes the genus-$1$ components of the special fiber of $\CY_s$ following the strategy of Section 3. It adjoins to the completion of \texttt{K1} from Code Listing \ref{code-3} a center \texttt{a} of the discoid corresponding to \texttt{T.vertices()[2]}, where \texttt{T} is the tree computed in Code Listing \ref{code-3}. Then it uses the functionality of SageMath for computing extensions of valuations in function fields to compute an equation of the genus-$1$ curve we are interested in.

\begin{lstlisting}[language=Python, label = {code-2}, caption=SageMath program. Uses the functionality of Sage to compute another special fiber]
from mclf import *

R.<c> = QQ[]
K1.<c> = QQ.extension(c^12 - 12*c^10 + 60*c^8 - 159*c^6 + 234*c^4 - 180*c^2 + 57)
zeta9 = c^2 - 2

v = K1.valuation(3)
K1local = pAdicNumberField(K1, v)
R.<x> = K1[]
discoid = x^9 + (54*c^6 + 162)*x^6 - 243*c^4*x^5 + (729*c^4 - 729*c^2)*x^4 + (729*c^8 - 729*c^6 + 2187*c^4 + 3645*c^2 - 2187)*x^3 + (4374*c^10 + 13122*c^6 + 13122*c^4 + 13122*c^2 + 13122)*x^2 + (13122*c^10 + 6561/2*c^8 + 39366*c^4)*x + 6561*c^10 - 13122*c^8 + 15309/2*c^6 + 52488*c^4 + 6561/5*c^2
K2local  = K1local.simple_extension(discoid)
K2 = K2local.number_field()
c = K2local.embedding().approximate_generator(10)
zeta9 = c^2 - 2
w = K2.valuation(3)
R.<x> = K2[]
discoid = x^9 + (54*c^6 + 162)*x^6 - 243*c^4*x^5 + (729*c^4 - 729*c^2)*x^4 + (729*c^8 - 729*c^6 + 2187*c^4 + 3645*c^2 - 2187)*x^3 + (4374*c^10 + 13122*c^6 + 13122*c^4 + 13122*c^2 + 13122)*x^2 + (13122*c^10 + 6561/2*c^8 + 39366*c^4)*x + 6561*c^10 - 13122*c^8 + 15309/2*c^6 + 52488*c^4 + 6561/5*c^2
l = approximate_factorization(K2local, discoid)
a = -(l[0].approximate_polynomial()[0])

R.<x,y> = K2[]
F = (-2*zeta9^5 + zeta9^4 + zeta9^3 - zeta9^2 + 2*zeta9 - 1)*x^4 + (zeta9^5 + zeta9^3 + zeta9^2)*x^3*y + (24*zeta9^5 - 6*zeta9^4 + 15*zeta9^3 + 12*zeta9^2 + 15)*x^3 + (9*zeta9^4)*x^2*y + y^3 + (-45*zeta9^5 + 9*zeta9^4 - 18*zeta9^3 + 45*zeta9^2 - 63*zeta9 - 36)*x^2 + (9*zeta9^5 - 27*zeta9^4 - 18*zeta9^3 + 18*zeta9^2 - 36)*x*y + (135*zeta9^5 + 135*zeta9^4 - 135*zeta9^3 - 54*zeta9^2 + 270*zeta9 - 27)*x + (9*zeta9^5 + 54*zeta9^4 + 18*zeta9^3 - 36*zeta9^2 + 27*zeta9 + 63)*y + (-189*zeta9^5 - 81*zeta9^4 + 279*zeta9^3 - 108*zeta9^2 - 270*zeta9 + 72)
G = 0
for c in F:
	G += c[1]*K2local.approximation(c[0], 6)
FX.<x> = FunctionField(K2)
R.<y> = FX[]
G = R(G)
FY = FX.extension(G)

X = BerkovichLine(FX, w)
xi = X.point_from_discoid(x-a, 4/3)

l = xi.valuation().extensions(FY)
for vxi in l:
	print(vxi.residue_field())

\end{lstlisting}

\printbibliography

@phdthesis{rueth,
    title    = {Models of curves and valuations},
    school   = {Ulm University},
    author   = {R\"uth, Julian},
    year     = {2014}
}

@article {harrismumford,
    AUTHOR = {Harris, Joe and Mumford, David},
     TITLE = {On the {K}odaira dimension of the moduli space of curves},
      NOTE = {With an appendix by William Fulton},
   JOURNAL = {Invent. Math.},
  FJOURNAL = {Inventiones Mathematicae},
    VOLUME = {67},
      YEAR = {1982},
    NUMBER = {1},
     PAGES = {23--88},
      ISSN = {0020-9910},
   MRCLASS = {14H10},
  MRNUMBER = {664324},
MRREVIEWER = {Yujiro Kawamata},
       DOI = {10.1007/BF01393371},
       URL = {https://doi.org/10.1007/BF01393371},
}

@book {liu,
    AUTHOR = {Liu, Qing},
     TITLE = {Algebraic geometry and arithmetic curves},
    SERIES = {Oxford Graduate Texts in Mathematics},
    VOLUME = {6},
 PUBLISHER = {Oxford University Press, Oxford},
      YEAR = {2002},
}

@misc{balakrishnandogramuellertuitmanvonk,
  doi = {10.48550/ARXIV.2101.01862},
  
  url = {https://arxiv.org/abs/2101.01862},
  
  author = {Balakrishnan, Jennifer S. and Dogra, Netan and Müller, Jan Steffen and Tuitman, Jan and Vonk, Jan},
  
  keywords = {Number Theory (math.NT), Algebraic Geometry (math.AG), FOS: Mathematics, FOS: Mathematics},
  
  title = {Quadratic Chabauty for modular curves: Algorithms and examples},
  
  publisher = {arXiv},
  
  year = {2021},
  
  copyright = {Creative Commons Attribution 4.0 International}
}

@book {berkovich,
    AUTHOR = {Berkovich, Vladimir G.},
     TITLE = {Spectral theory and analytic geometry over non-{A}rchimedean
              fields},
    SERIES = {Mathematical Surveys and Monographs},
    VOLUME = {33},
 PUBLISHER = {American Mathematical Society, Providence, RI},
      YEAR = {1990},
}

@article {ctt,
    AUTHOR = {Cohen, Adina and Temkin, Michael and Trushin, Dmitri},
     TITLE = {Morphisms of {B}erkovich curves and the different function},
   JOURNAL = {Adv. Math.},
  FJOURNAL = {Advances in Mathematics},
    VOLUME = {303},
      YEAR = {2016},
     PAGES = {800--858},
      ISSN = {0001-8708},
   MRCLASS = {14G22 (14H37)},
  MRNUMBER = {3552539},
MRREVIEWER = {Alessandra Bertapelle},
       DOI = {10.1016/j.aim.2016.08.029},
       URL = {https://doi.org/10.1016/j.aim.2016.08.029},
}

@article {rousesutherlandzureickbrown,
    AUTHOR = {Rouse, Jeremy and Sutherland, Andrew V. and Zureick-Brown,
              David},
     TITLE = {{$\ell$}-adic images of {G}alois for elliptic curves over
              {$\Bbb{Q}$} (and an appendix with {J}ohn {V}oight)},
      NOTE = {With an appendix with John Voight},
   JOURNAL = {Forum Math. Sigma},
  FJOURNAL = {Forum of Mathematics. Sigma},
    VOLUME = {10},
      YEAR = {2022},
     PAGES = {Paper No. e62, 63},
   MRCLASS = {11G05 (11F80 11G18 14G35 14H52)},
  MRNUMBER = {4468989},
MRREVIEWER = {Tristan Phillips},
       DOI = {10.1017/fms.2022.38},
       URL = {https://doi.org/10.1017/fms.2022.38},
}

\end{document}